\documentclass[11pt, a4paper]{article}
\usepackage[T1]{fontenc}
\usepackage{amsfonts}
\usepackage{amsmath}
\usepackage{amssymb}
\usepackage{amsthm}
\usepackage{bbm}
\usepackage{bm}
\usepackage{mathrsfs}
\usepackage{verbatim}
\usepackage{setspace}
\usepackage{color}
\usepackage{pdfsync}
\usepackage{enumitem}

\theoremstyle{plain}
\newtheorem{theorem}{Theorem}[section]

\newtheorem{lemma}[theorem]{Lemma}

\theoremstyle{definition}

\newtheorem{remark}[theorem]{Remark}

\newtheorem{example}[theorem]{Example}

\newtheorem{assumption}[theorem]{Assumption}
\theoremstyle{remark}

\renewenvironment{thebibliography}[1]{%
\begin{oldthebibliography}{#1}%
\setlength{\baselineskip}{.9em}
\linespread{1}%
\small
\setlength{\parskip}{0ex}%
\setlength{\itemsep}{.2em}%
}%
{%
\end{oldthebibliography}%
}

\newcommand{\F}{\mathbb{F}}

\newcommand{\R}{\mathbb{R}}

\newcommand{\cE}{\mathcal{E}}
\newcommand{\cF}{\mathcal{F}}

\newcommand{\cN}{\mathcal{N}}

\newcommand{\cP}{\mathcal{P}}

\DeclareMathOperator{\Var}{Var}

\DeclareMathOperator{\limmed}{lim\, med}

\newcommand{\as}{\mbox{-a.s.}}

\newcommand{\1}{\mathbf{1}}

\newcommand{\br}[1]{\langle #1 \rangle}

\numberwithin{equation}{section}

\usepackage[pdfborder={0 0 0}]{hyperref}
\hypersetup{
  urlcolor = black,
  pdfauthor = {Marcel Nutz},
  pdfkeywords = {Pathwise stochastic integral, aggregation, non-dominated model, second order BSDE, $G$-expectation, medial limit},
  pdftitle = {Pathwise Construction of Stochastic Integrals},
  pdfsubject = {Pathwise Construction of Stochastic Integrals},
  pdfpagemode = UseNone
}

\begin{document}

\title{\vspace{-0.0cm}
Pathwise Construction of Stochastic Integrals
\date{First version: August 14, 2011. This version: June 12, 2012.}
\author{
  Marcel Nutz%
  \thanks{
  Dept.\ of Mathematics, Columbia University, New York, \texttt{mnutz@math.columbia.edu}. Financial support by European Research Council Grant 228053-FiRM is gratefully acknowledged.
  }
 }
}
\maketitle \vspace{-1em}

\begin{abstract}
We propose a method to construct the stochastic integral simultaneously under a non-dominated family of probability measures.
Path-by-path, and without referring to a probability measure, we construct a sequence of Lebesgue-Stieltjes integrals whose medial limit coincides with the usual stochastic integral under essentially any probability measure such that the integrator is a semimartingale. This method applies to any predictable integrand.
\end{abstract}

\vspace{.9em}

{\small
\noindent \emph{Keywords} Pathwise stochastic integral, aggregation, non-dominated model, second order BSDE, $G$-expectation, medial limit

\noindent \emph{AMS 2000 Subject Classification}
60H05 %
}

\section{Introduction}\label{se:intro}

The goal of this article is to construct the stochastic integral in a setting where a large family $\cP$ of probability measures is considered simultaneously. More precisely, given a predictable integrand $H$ and a process $X$ which is a semimartingale under all $P\in\cP$, we wish to construct a process  which $P$-a.s.\ coincides with the $P$-It\^o integral
${}^{(P)\hspace{-5pt}}\int H\,dX$ for all $P\in\cP$; i.e., we seek to \emph{aggregate} the family $\{{}^{(P)\hspace{-5pt}}\int H\,dX\}_{P\in\cP}$ into a single process.
This work is motivated by recent developments in probability theory and stochastic optimal control, where stochastic integrals under families of measures have arisen in the context of Denis and Martini's model of volatility uncertainty in financial markets~\cite{DenisMartini.06}, Peng's $G$-expectation~\cite{Peng.10} and in particular the representation of $G$-martingales of Soner et al.~\cite{SonerTouziZhang.2010rep}, the second order backward stochastic differential equations and target problems of Soner et al.~\cite{SonerTouziZhang.2010bsde, SonerTouziZhang.2010dual} and in the non-dominated optional decomposition of Nutz and Soner~\cite{NutzSoner.10}. In all these examples, one considers a family $\cP$ of (mutually singular) measures which cannot be dominated by a finite measure.

The key problem in stochastic integration is, of course, that the paths of the integrator $X$ are not of finite variation. The classical Hilbert space construction depends strongly on the probability measure since it exploits the martingale properties of the integrator; in particular, it is far from being pathwise. A natural extension of the classical construction to a family $\cP$, carried out in \cite{DenisMartini.06, Peng.10, LiPeng.11} under specific regularity assumptions, is to consider the upper expectation $\cE_\cP[\,\cdot\,]=\sup_{P\in \cP} E^P[\,\cdot\,]$ instead of the usual expectation and define the stochastic integral along the lines of the usual closure operation from simple integrands, but under a norm induced by $\cE_\cP$. Since such a norm is rather strong, this leads to a space of integrands which is smaller than in the classical case.

A quite different approach is to construct the stochastic integral pathwise and without direct reference to a probability measure; in this case, the integral will be well defined under all $P\in\cP$. The strongest previous result about pathwise integration is due to Bichteler~\cite{Bichteler.81}; it includes in particular the integrals which can be defined by F\"ollmer's approach~\cite{Foellmer.81}. A very readable account of that result and some applications also appear in Karandikar~\cite{Karandikar.95}. Bichteler's remarkable observation is that if $H=G_-$ is the left limit of a c\`adl\`ag process $G$, then one can obtain a very favorable discretization of  $\int H\,dX$ by sampling $H$ at a specific sequence of stopping times, given by the level-crossing times of $H$ at a grid of mesh size $2^{-n}$. Namely, the corresponding Riemann sums $\int H^n\,dX$ converge \emph{pointwise} in $\omega$ (uniformly in time, outside a set which is negligible for all $P$), and this limit yields a pathwise definition of the integral. To appreciate this fact, recall that as soon as $H$ is left-continuous, essentially any discretization will converge to the stochastic integral, but only \emph{in measure}, so that it is not immediate to construct a single limiting process for all $P$: passing to a subsequence yields $P$-a.s.\ convergence for some $P$, but not for all $P$ at once.

Finally, let us mention that the ``skeleton approach'' of Willinger and Taqqu~\cite{WillingerTaqqu.88,WillingerTaqqu.89} is yet another pathwise stochastic integration theory (related to the problem of martingale representation); however, this construction cannot be used in our context since it depends strongly on the equivalence class of the probability measure.
The main drawback of the previous results is that the restrictions on the admissible integrands can be too strong for applications. In particular, the integrals appearing in the main results of \cite{SonerTouziZhang.2010bsde} and~\cite{NutzSoner.10} could not be aggregated for that reason. In a specific setting where $X$ is continuous, one possible solution, proposed by Soner et al.~\cite{SonerTouziZhang.2010aggreg}, is to impose a strong separability assumption on the set $\cP$, which allows to glue together directly the processes $\{{}^{(P)\hspace{-5pt}}\int H\,dX\}_{P\in\cP}$.

In the present paper, we propose a surprisingly simple, pathwise construction of the stochastic integral for arbitrary predictable integrands $H$ (Theorem~\ref{th:aggregInt}) and a very general set $\cP$. In a first step, we average $H$ in the time variable to obtain approximations $H^n$ of finite variation, which allows to define the integral $\int H^n\,dX$ pathwise. This averaging requires a certain domination assumption; however, by imposing the latter at the level of predictable characteristics, we achieve a condition which is satisfied in all cases of practical interest (Assumption~\ref{ass:domination}). The second step is the passage to the limit, where we shall work with the (projective limit of the) convergence in measure
and use a beautiful construction due to G.~Mokobodzki, known as medial limit (cf.\ Meyer~\cite{Meyer.73} and Section~\ref{se:aggreg} below). As a Banach limit, this is not a limit in a proper sense, but it will allow us to define path-by-path a measurable process $\int H\,dX$ which coincides with the usual stochastic integral under every $P$; in fact,  it seems that this technology may be useful in other aggregation problems as well. To be precise, this requires a suitable choice of the model of set theory: we shall work under the Zermelo--Fraenkel set theory with axiom of choice (ZFC) plus the Continuum Hypothesis.

\section{Main Result}

Let $(\Omega,\cF)$ be a measurable space equipped with a right-continuous filtration $\F=(\cF_t)_{t\in[0,1]}$ and let $\cP$ be a family of probability measures on $(\Omega,\cF)$. In the sequel, we shall work in the $\cP$-universally augmented filtration
\[
  \F^*=(\cF^*_t)_{t\in[0,1]},\quad \cF^*_t:=\bigcap_{P\in\cP} \cF_t\vee \cN^P,
\]
where $\cN^P$ is the collection of $(\cF,P)$-nullsets (but see also Remark~\ref{rk:concluding}(ii)). Moreover, let $X$ be an adapted process with c\`adl\`ag paths such that $X$ is a semimartingale under each $P\in\cP$, and let $H$ be a predictable process which is $X$-integrable under each $P\in\cP$.

Since we shall average $H$ in time, it is necessary to fix a measure on $[0,1]$, at least path-by-path. We shall work under the following condition; see Jacod and Shiryaev~\cite[Section~II]{JacodShiryaev.03} for the notion of predictable characteristics.

\begin{assumption}\label{ass:domination}
  There exists a predictable c\`adl\`ag increasing process $A$ such that
  \[
    \Var(B^P) + \br{X^c}^P + (x^2\wedge 1) * \nu^P \ll A\quad P\as\quad\mbox{for all}\quad P\in\cP;
  \]
  where $(B^P, \br{X^c}^P, \nu^P)$ is the triplet of predictable characteristics of $X$ under $P$ and $\Var(B^P)$ denotes the total variation of $B^P$.
\end{assumption}

With ${}^{(P)\hspace{-5pt}}\int H\,dX$ denoting the It\^o integral under $P$, our main result can be stated as follows.

\begin{theorem}\label{th:aggregInt}
  Under Assumption~\ref{ass:domination}, there exists an $\F^*$-adapted c\`adl\`ag process, denoted by $\int H\,dX$, such that
  \[
    \int H\,dX=\sideset{^{(P)\hspace{-7pt}}}{}{\int}H\,dX\quad P\as\quad\mbox{for all}\quad P\in\cP.
  \]
  Moreover, the construction of any path $(\int H\,dX)(\omega)$ involves only the paths $H(\omega)$ and $X(\omega)$.
\end{theorem}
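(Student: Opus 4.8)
The plan is to realize the integral through two reductions: first replace the general predictable integrand $H$ by finite-variation-in-time approximations $H^n$ for which $\int H^n\,dX$ admits a genuinely pathwise definition, and then aggregate the resulting sequence of pathwise processes into a single limit object via Mokobodzki's medial limit. The role of Assumption~\ref{ass:domination} is to make the first step succeed simultaneously under every (mutually singular) $P\in\cP$, whereas the medial limit resolves the central difficulty of the second step, namely that convergence in measure does not by itself produce one process that works for all $P$ at once.

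\emph{Step 1 (averaging).} Using the dominating process $A$, I would define predictable approximations by averaging $H$ against $dA$ over shrinking left-neighborhoods, e.g.
\[
  H^n_t=\frac{1}{A_t-A_{(t-2^{-n})^+}}\int_{((t-2^{-n})^+,\,t]}H_s\,dA_s,
\]
with a convention handling the degenerate case $A_t=A_{(t-2^{-n})^+}$. Since $t\mapsto\int_{(0,t]}H\,dA$ is predictable and of finite variation, each $H^n$ is predictable and of finite variation in $t$. By a Besicovitch-type differentiation theorem for the Radon measure $dA(\omega)$, one has $H^n_t(\omega)\to H_t(\omega)$ for $dA(\omega)$-a.e.\ $t$ on every path. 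The domination $\Var(B^P)+\br{X^c}^P+(x^2\wedge1)*\nu^P\ll A$ then transfers this $dA$-a.e.\ convergence into the convergence of integrands required for stochastic integration under each $P$, so that $\int H^n\,dX\to{}^{(P)}\!\int H\,dX$ uniformly on $[0,1]$ in $P$-measure, for every $P\in\cP$.

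\emph{Step 2 (pathwise integral of $H^n$).} Because $H^n$ has finite-variation paths, integration by parts turns its stochastic integral into path-local operations:
\[
  \int_0^tH^n_{s-}\,dX_s=H^n_tX_t-H^n_0X_0-\int_0^tX_{s-}\,dH^n_s-\sum_{0<s\le t}\Delta H^n_s\,\Delta X_s,
\]
where the Lebesgue--Stieltjes integral $\int X_{s-}\,dH^n_s$ and the jump sum $[H^n,X]_t=\sum\Delta H^n_s\,\Delta X_s$ are computed directly from the paths $H^n(\omega)$ and $X(\omega)$. This defines a c\`adl\`ag process $Y^n:=\int H^n\,dX$ pathwise, coinciding $P$-a.s.\ with the $P$-It\^o integral of $H^n$ for every $P$.

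\emph{Step 3 (aggregation).} We now have explicitly pathwise processes $Y^n$ with $Y^n\to{}^{(P)}\!\int H\,dX=:Y^{(P)}$ in measure (uniformly in time) under each $P\in\cP$. The obstacle is that a single nullset cannot be chosen for the whole family: passing to an a.s.-convergent subsequence serves one $P$ only, and $\cP$ is non-dominated. Here I would apply the medial limit $\limmed$, which assigns to each bounded real sequence a value between its $\liminf$ and $\limsup$, extends the ordinary limit, and---crucially---satisfies $\int\limmed_n f_n\,dP=\lim_n\int f_n\,dP$ for uniformly bounded measurable $f_n$, producing a universally measurable object. After a standard localization (stopping $X$ so the relevant quantities are bounded and upgrading the convergence to $L^1(P)$ at each fixed time), the commutation of $\limmed$ with the $P$-integral together with its homogeneity $\limmed_n(gf_n)=g\,\limmed_n f_n$ for $\cF_t$-measurable $g$ yield, for each fixed $t$, that $Z_t:=\limmed_n Y^n_t$ equals $Y^{(P)}_t$ $P\as$ for every $P$. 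Finally I would take $\int H\,dX$ to be the pathwise right-regularization of $Z$ along rational times; since $Y^{(P)}$ is c\`adl\`ag and $Z=Y^{(P)}$ a.s.\ on the countable rational set, this recovers $Y^{(P)}$ $P\as$, and it is $\F^*$-adapted by the universal measurability of $\limmed$, every operation being local in $\omega$. The main obstacle is precisely this simultaneous passage to the limit; the medial limit dissolves it, at the price of invoking ZFC${}+{}$CH to guarantee the existence and universal measurability of $\limmed$.
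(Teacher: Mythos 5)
Your overall strategy---averaging $H$ against $dA$ to obtain finite-variation approximations, defining their integrals path-by-path via integration by parts, and aggregating through Mokobodzki's medial limit along rational times followed by a right-regularization---is exactly the paper's. However, there are two genuine gaps in the execution.

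First, and most seriously, your integration-by-parts formula in Step 2 defines the integral of the \emph{left-limit} process $H^n_-$, not of $H^n$: subtracting the jump sum $\sum_{0<s\le t}\Delta H^n_s\,\Delta X_s$ is precisely what converts ${}^{(P)\hspace{-5pt}}\int H^n_s\,dX_s$ into ${}^{(P)\hspace{-5pt}}\int H^n_{s-}\,dX_s$. But the convergence claimed in Step 1 (and proved in the paper) holds for the integral of $H^n$, and it \emph{fails} for $H^n_-$ as soon as $A$ has atoms, i.e.\ as soon as $X$ has fixed (predictable) jump times---which Assumption~\ref{ass:domination} then forces $A$ to charge. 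Concretely, take $X_t=\xi\,\mathbf{1}_{[1/2,1]}(t)$ with $P(\xi=\pm1)=\tfrac12$, $H_s=\mathbf{1}_{\{s=1/2\}}$ and $A_t=t+\mathbf{1}_{[1/2,1]}(t)$. Then
\[
\sideset{^{(P)\hspace{-7pt}}}{}{\int}H\,dX=\xi\,\mathbf{1}_{[1/2,1]},\qquad
H^n_{1/2}=\tfrac{n}{n+1}\to1,\qquad\mbox{but}\qquad H^n_{1/2-}=0\ \mbox{ for every } n,
\]
so the paper's $Y^n$ from \eqref{eq:Yn} converges to the correct limit while your $Y^n\equiv0$ identically; your aggregated process would then be $0$, and the conclusion of Theorem~\ref{th:aggregInt} would fail. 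The repair is simply to \emph{not} subtract the jump term: $Y^n:=H^nX-\int X_-\,dH^n$ as in \eqref{eq:Yn} is itself the It\^o integral of the predictable integrand $H^n$ (the jump covariation is exactly what compensates the difference between $H^n$ and $H^n_-$), and this is the object whose convergence can be proved.

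Second, you apply the averaging directly to ``the general predictable integrand $H$,'' but for a merely $X$-integrable $H$ the path averages need not even exist: $X$-integrability under every $P$ does not yield $\int_0^1|H_s|\,dA_s<\infty$ path-by-path (the density of $d\br{X^c}^P$ with respect to $dA$ may be small exactly where $H$ is large), and the dominated-convergence and Burkholder--Davis--Gundy estimates behind the $ucp(P)$ convergence---as well as the uniform $L^2$ bounds your Step 3 localization needs---all require $|H|\le c$. The paper therefore proceeds in two stages: it proves the statement for bounded $H$, and then aggregates the truncations $\int H\mathbf{1}_{\{|H|\le n\}}\,dX$ by a \emph{second} application of the medial-limit lemma, using \eqref{eq:truncationConv}; your proposal omits this entirely. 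Relatedly, in Step 3 you only invoke the commutation of $\limmed$ with expectations of uniformly bounded sequences, which is what forces your localization detour; Meyer's theorem gives the stronger property that $\limmed_n Z_n=Z^P$ $P$-a.s.\ whenever $Z_n\to Z^P$ in $P$-probability, with no boundedness assumption, and this is what makes the paper's aggregation lemma essentially immediate. There is also a minor glossed reduction: the canonical decomposition $X=M+B$ underlying Step 1 requires first removing the jumps of $X$ exceeding $1$ (a pathwise finite-variation sum), which should be stated.
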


\pagebreak

Assumption~\ref{ass:domination} is quite weak and should not be confused with a domination property for $\cP$ or the paths of $X$. In fact, most semimartingales of practical interest have characteristics absolutely continuous with respect to $A_t=t$ (diffusion processes, solutions of L\'evy driven stochastic differential equations, etc.). The following example covers the applications from the introduction.

\begin{example}
  Let $X$ be a continuous local martingale under each $P\in\cP$, then Assumption~\ref{ass:domination} is satisfied. Indeed, let
  \[
    A:= X^2-X_0^2-2\int X\,dX;
  \]
  here the stochastic integral can be defined pathwise by Bichteler's construction~\cite[Theorem~7.14]{Bichteler.81}. Then $A$ is a continuous process and, by It\^o's formula,  $A=[X]-X_0^2=\br{X^c}^P$ $P$-a.s.\ for every $P\in\cP$. Therefore, Assumption~\ref{ass:domination} is satisfied with equality.
\end{example}

The previous example should illustrate that Assumption~\ref{ass:domination} is much weaker than it may seem at first glance. For instance, let $X$ be the canonical process on $\Omega=C([0,1];\R)$ and let $\Lambda$ be the set of \emph{all} increasing continuous functions $f: [0,1]\to \R_+$ with $f(0)=0$. Using time-changed Brownian motions, construct a situation where for any $f\in\Lambda$ there exists $P\in\cP$ under which $f$ is the quadratic variation of $X$, $P$-a.s.
Then we observe that, by the above, Assumption~\ref{ass:domination} is satisfied---even though it is clearly impossible to dominate the set $\Lambda$. The crucial point here is the flexibility to assign different values to $A$ on the various supports of the measures $P$.

\subsection{Approximating Sequence}\label{se:approx}

Since our aim is to prove Theorem~\ref{th:aggregInt}, we may assume without loss of generality that $X_0=0$. Moreover, we may assume that the jumps of $X$ are bounded by one in magnitude,
\[
 |\Delta X|\leq 1.
\]
Indeed, the process $\check X:=\sum_{s\leq \cdot}  \1_{\{|\Delta X_s|>1\}} \Delta X_s$ is of finite variation and $\int H\,d\check X$ is easily defined since it is simply a sum. Decomposing
\[
  X=(X-\check X) +\check X,
\]
it suffices to construct the integral $\int H\,d(X-\check X)$ whose integrator has jumps bounded by one; moreover, $X-\check X$ satisfies Assumption~\ref{ass:domination} if $X$ does. (Of course, we cannot reduce further to the martingale case, since the semimartingale decomposition of $X$ depends on $P$!)

In this section, we construct an approximating sequence of integrands $H^n$ such that the integrals $\int H^n\,dX$ can be defined pathwise and tend to the integral of $H$ in measure. To this end, we shall assume that $H$ is uniformly bounded by a constant,
\[
  |H|\leq c.
\]
In fact, we can easily remove this condition later on, since
\begin{equation}\label{eq:truncationConv}
  \sideset{^{(P)\hspace{-7pt}}}{}{\int} H\1_{\{|H|\leq n\}}\,dX \to \sideset{^{(P)\hspace{-7pt}}}{}{\int} H \,dX\quad \mbox{in}\quad ucp(P)\quad\mbox{for all}\quad P\in\cP
\end{equation}
by the definition of the usual stochastic integral. Here $ucp(P)$ stands for convergence in probability $P$, uniformly (on compacts) in time.
We recall the process $A$ from Assumption~\ref{ass:domination}.
We may assume that $A_t-A_s\geq t-s$ for all $0\leq s\leq t\leq 1$ by replacing $A_t$ with $A_t+t$ if necessary; moreover, to avoid complicated notation, we define $H_t=A_t=0$ for $t<0$.

\begin{lemma}\label{le:approxSeq}
  For each $n\geq 1$, define the Lebesgue-Stieltjes integral
  \begin{equation}\label{eq:Hn}
    H^n_t:=\frac{1}{A_t-A_{t-1/n}}\int_{t-1/n}^t H_s\,dA_s,\quad t>0
  \end{equation}
  and $H^n_0:=0$. Then
  \begin{equation}\label{eq:Yn}
    Y^n:=H^nX- \int X_-\, dH^n
  \end{equation}
  is well defined in the Lebesgue-Stieltjes sense and satisfies
  \begin{equation*}%
    Y^n= \sideset{^{(P)\hspace{-7pt}}}{}{\int} H^n\,dX \to \sideset{^{(P)\hspace{-7pt}}}{}{\int} H\,dX\quad \mbox{in}\quad ucp(P)\quad\mbox{for all}\quad P\in\cP.
  \end{equation*}
\end{lemma}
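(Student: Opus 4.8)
The statement has three parts, which I would treat in order: that $Y^n$ is well defined path by path; the pathwise identity $Y^n={}^{(P)}\!\int H^n\,dX$; and the $ucp(P)$ convergence. For the first part I would verify that $H^n$ is a bounded, predictable, c\`adl\`ag process of finite variation. The bound $|H^n|\le c$ is immediate from $|H|\le c$ and the fact that $H^n_t$ is a $dA$-average. The numerator $t\mapsto\int_0^t H\,dA-\int_0^{t-1/n}H\,dA$ and the denominator $D_t:=A_t-A_{t-1/n}$ are c\`adl\`ag of finite variation, and the normalisation $A_t-A_s\ge t-s$ gives $D_t\ge 1/n>0$; hence the quotient $H^n$ is again c\`adl\`ag of finite variation. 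Predictability holds because $\int_0^\cdot H\,dA$ is predictable (a Lebesgue--Stieltjes integral of the predictable integrand $H$ against the predictable increasing process $A$), the backward shift by $1/n$ preserves predictability, and $D\ge 1/n$ is predictable. Therefore $dH^n(\omega)$ is a finite signed measure and $X_-(\omega)$ is bounded and c\`agl\`ad for each $\omega$, so $\int X_-\,dH^n$ is a bona fide Lebesgue--Stieltjes integral and $Y^n(\omega)$ depends only on $H(\omega)$ and $X(\omega)$.

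For the identity I would apply integration by parts to the semimartingales $H^n$ and $X$, using $X_0=H^n_0=0$:
\[
  H^n_t X_t=\int_0^t H^n_{s-}\,dX_s+\int_0^t X_{s-}\,dH^n_s+[H^n,X]_t.
\]
As $H^n$ has finite variation, $[H^n,X]_t=\sum_{s\le t}\Delta H^n_s\,\Delta X_s$. Since $H^n$ is predictable, the jump process $\Delta H^n=H^n-H^n_-$ is a predictable process carried by a thin set, so its stochastic integral against $X$ equals the same jump sum; hence $\int_0^t H^n_s\,dX_s=\int_0^t H^n_{s-}\,dX_s+[H^n,X]_t$. Rearranging the integration-by-parts identity then gives $Y^n=H^nX-\int X_-\,dH^n={}^{(P)}\!\int H^n\,dX$, $P$-a.s., for every $P\in\cP$.

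The heart of the matter, and the step I expect to be the main obstacle, is the convergence. Writing
\[
  H^n_t=\frac{(H\,dA)\big((t-1/n,t]\big)}{(dA)\big((t-1/n,t]\big)},
\]
the assertion $H^n_t\to H_t$ is a one-sided differentiation theorem: the measure $H\,dA$ is differentiated with respect to $dA$ along the left-intervals $(t-1/n,t]\downarrow\{t\}$, whose Radon--Nikodym density is $H$. For a path-continuous integrand this is elementary, since $|H^n_t-H_t|\le\sup_{t-1/n<s\le t}|H_s-H_t|$; for a general bounded predictable $H$ I would combine a Hardy--Littlewood-type maximal inequality for the (possibly atomic) measure $dA(\omega)$ with approximation by such integrands, obtaining $H^n_t(\omega)\to H_t(\omega)$ for $dA(\omega)$-almost every $t$ and $P$-almost every $\omega$, and hence $dA\otimes P$-almost everywhere by Fubini. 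The delicate point is exactly this differentiation statement for an arbitrary finite measure $dA$ with atoms and for one-sided windows, for which the one-dimensional Besicovitch--Vitali covering theory is the natural tool. Here Assumption~\ref{ass:domination} does its work: the predictable measure controlling the stochastic integral under $P$, built from $\Var(B^P)+\br{X^c}^P+(x^2\wedge1)*\nu^P$, is absolutely continuous with respect to $dA\otimes P$, so the convergence transfers to that measure. With the uniform bound $|H^n|\le c$ supplying an $X$-integrable dominating integrand, the dominated convergence theorem for stochastic integrals yields ${}^{(P)}\!\int H^n\,dX\to{}^{(P)}\!\int H\,dX$ in $ucp(P)$ for every $P\in\cP$.
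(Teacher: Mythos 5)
Your proposal is correct, and its first two steps coincide with the paper's: the paper also verifies that $H^n$ is predictable, bounded by $c$, c\`adl\`ag of finite variation (so that $Y^n$ is a pathwise Lebesgue--Stieltjes object) and also identifies $Y^n$ with ${}^{(P)}\!\int H^n\,dX$ ``via integration by parts''; your expansion of that sentence --- finite-variation covariation $[H^n,X]=\sum\Delta H^n\Delta X$ plus the fact that the thin predictable process $\Delta H^n$ integrates against $X$ to the same jump sum --- is exactly the right way to fill it in. Where you genuinely diverge is the convergence step. The paper never passes through pointwise $dA$-a.e.\ differentiation: it asserts $H^n(\omega)\to H(\omega)$ in $L^1([0,1],dA(\omega))$ for \emph{every} $\omega$ (``standard theorem on approximate identities''), then fixes $P$, takes the canonical decomposition $X=M+B$ (available because $|\Delta X|\le 1$), localizes so that $\Var(B)$ and $[M]$ are bounded, and estimates $E[\sup_t|\int_0^t(H^n-H)\,dX|^2]$ by hand: the $dB$ part via uniform integrability of $|H^n-H|\,\frac{d\Var(B)}{dA}$ in $L^1(dA)$, the $dM$ part via Burkholder--Davis--Gundy together with $\br{M}=\br{X^c}+(x^2\wedge1)*\nu-\sum_s(\Delta B_s)^2\ll A$. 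Your route --- $dA(\omega)$-a.e.\ convergence from a one-sided maximal/covering argument, then the dominated convergence theorem for stochastic integrals --- reaches the same conclusion with a black-box finish instead of explicit BDG estimates, at the price of invoking heavier standard machinery; the paper's route has the advantage of showing exactly where each term of Assumption~\ref{ass:domination} enters.

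Two caveats you should supply to close your version. First, the stochastic dominated convergence theorem as usually stated (e.g.\ Protter, Thm.~IV.32) assumes pointwise convergence \emph{everywhere}, whereas you have it only off the predictable set $N=\{(\omega,t):H^n_t(\omega)\not\to H_t(\omega)\}$, which is $dA\otimes P$-null. You therefore need the (easy but necessary) remark that replacing $H^n$ by $H^n\1_{N^c}+H\1_{N}$ alters no stochastic integral under $P$ --- and this is legitimate precisely because Assumption~\ref{ass:domination} makes the control measures $d\Var(B^P)$ and $d\br{M}^P$ absolutely continuous with respect to $dA$, $P$-a.s.; this is the same transfer the paper performs explicitly. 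Second, your ``delicate point'' is real but true, and it is in fact the same real-analysis fact the paper hides behind the phrase ``approximate identities'': for a finite measure with atoms and one-sided windows $(t-1/n,t]$, neither the a.e.\ nor the $L^1(dA)$ statement follows from translation-invariance arguments. The clean proof is the greedy (rightmost-point) selection of disjoint intervals, which gives the one-sided maximal operator with respect to \emph{any} Radon measure on the line a weak $(1,1)$ bound with constant $1$; a.e.\ convergence then follows from the dense class of continuous functions (atoms are harmless since the window contains its right endpoint, and the normalization $A_t-A_s\ge t-s$ keeps the denominators positive), and the paper's $L^1(dA)$ statement follows from yours by bounded convergence. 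With these two points made explicit, your outline is a complete and valid alternative proof.
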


\begin{proof}
  Recalling that $|H|\leq c$ and that $A$ is a predictable increasing c\`adl\`ag process, we see that $H^n$ is a predictable process satisfying $|H^n|\leq c$ identically and having c\`adl\`ag path of finite variation. In particular, we can use the Lebesgue-Stieltjes integral to define the process $Y^n$ pathwise via~\eqref{eq:Yn}.
  We deduce via integration by parts
  that $Y^n$ coincides $P$-a.s.\ with the stochastic integral ${}^{(P)\hspace{-5pt}}\int H^n\,dX$ for each $P\in\cP$.

  By the standard theorem on approximate identities, we have
  \begin{equation}\label{eq:HnToH}
    H^n (\omega) \to H(\omega)\quad\mbox{in}\quad L^1([0,1],dA(\omega))\quad\mbox{for all}\quad \omega\in\Omega.
  \end{equation}
  For the remainder of the proof, we fix $P\in\cP$ and show the convergence of $Y^n$ to ${}^{(P)\hspace{-5pt}}\int H\,dX$ in $ucp(P)$. Since $P$ is fixed, we may use the usual tools of stochastic analysis under $P$ and write, as usual, $E$ for the expectation operator under $P$, etc. (One can pass to the augmentation of $\F^*$ under $P$ to have the ``usual assumptions'', although this is not important here.)

  Recall that the jumps of $X$ are bounded, so that there is a canonical decomposition
  $X=M+B$, where $M$ is a local martingale and $B$ is predictable of finite variation. Since the jumps of $M$ and $B$ are then also bounded, a standard localization argument allows us to assume that $\Var(B)$ and the quadratic variation $[M]$ are uniformly bounded. We have
  \begin{align*}
    E\bigg[\sup_{t\leq 1}& \bigg|\int_0^t H^n \,dX - \int_0^t H \,dX\bigg|^2\bigg] \\
    & \leq 2 E\bigg[\sup_{t\leq 1} \bigg|\int_0^t (H^n -H) \,dM\bigg|^2\bigg] + 2 E \bigg[\bigg|\int_0^1 |H^n -H| \,d\Var(B)\bigg|^2\bigg].
  \end{align*}
  The second expectation on the right hand side converges to zero; indeed, recalling that $|H|,|H^n|\leq c$, we see that $\int_0^1 |H^n -H| \,d\Var(B)$ is uniformly bounded and converges to zero pointwise. The latter follows from~\eqref{eq:HnToH} since $\Var(B)(\omega)\ll A(\omega)$ and
  \[
    \bigg\{|H^n(\omega)-H(\omega)|\frac{d\Var(B)(\omega)}{dA(\omega)}\bigg\}_{n\geq1}\subseteq L^1([0,1],dA(\omega))
  \]
  is uniformly integrable.

  It remains to show that the first expectation converges to zero. Let $\br{M}$ be the predictable compensator of $[M]$, then the Burkholder-Davis-Gundy inequalities yield
  \begin{align*}
    E\bigg[\sup_{t\leq 1} \bigg|\int_0^t (H^n -H) \,dM\bigg|^2\bigg]
    & \leq 4 E\bigg[\int_0^1 |H^n -H|^2 \,d[M]\bigg] \\
    & = 4 E\bigg[\int_0^1 |H^n -H|^2 \,d\br{M}\bigg].
  \end{align*}
  Recalling that $|\Delta X|\leq 1$, we have that
  \[
    \br{M} = \br{X^c} + (x^2\wedge 1) * \nu - \sum_{s\leq\cdot } (\Delta B_s)^2
  \]
  and in particular that $\br{M}\ll A$. Since $\br{M}$ is bounded like $[M]$, we conclude exactly as above that
  $E[\int_0^1 |H^n -H|^2 \,d\br{M}]$ converges to zero.
\end{proof}

\subsection{Aggregation by Approximation in Measure}\label{se:aggreg}

In this section, we shall find it very useful to employ Mokobodzki's medial limit, which yields a universal method (i.e., independent of the underlying probability) to identify the limit of a sequence which converges in probability. More precisely,
``$\limmed$'' is a mapping on the set of real sequences with the following property (cf.\ \cite[Theorems~3, 4]{Meyer.73}): If $(Z_n)$ is a sequence of random variables on a measurable space $(\Omega',\cF')$, then $Z(\omega):=\limmed_n Z_n(\omega)$ is universally measurable and if $P$ is a probability measure on $(\Omega',\cF')$ such that $Z_n$ converges to some random variable $Z^P$ in probability $P$, then $Z=Z^P$ $P$-a.s. Here uniform measurability refers to the universal completion of $\cF'$ under all probability measures on ($\Omega',\cF'$).

Although developed in a different context and apparently not used before in ours, medial limits seem to be tailored to our task. Their construction is usually achieved through
a transfinite induction that uses the Continuum Hypothesis (cf.\ \cite{Meyer.73}); in fact, it is known that medial limits exist under weaker hypotheses (Fremlin~\cite[538S]{Fremlin.08}), but not under ZFC alone (Larson~\cite{Larson.09}). We shall adopt a sufficient set of axioms; since the Continuum Hypothesis is independent of ZFC, we consider this a pragmatic choice of the model of set theory for our purposes.

We have the following result for c\`adl\`ag processes.

\begin{lemma}\label{le:limmed}
  Let $(Y^n)_{n\geq1}$ be a sequence of $\F^*$-adapted c\`adl\`ag processes. Assume that for each $P\in\cP$ there exists a c\`adl\`ag process
  $Y^P$ such that $Y^n_t\to Y^P_t$ in measure $P$ for all $t\in[0,1]$. Then there exists an $\F^*$-adapted c\`adl\`ag process $Y$ such that $Y=Y^P$ $P$-a.s.\ for all $P\in\cP$.
\end{lemma}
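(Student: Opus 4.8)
The plan is to apply the medial limit only on a countable dense set of times and then to regularize the resulting process path by path; the pointwise-in-$t$ medial limit by itself is useless because the exceptional $P$-null set would depend on $t$. First I would fix a countable dense set $D\subseteq[0,1]$ containing $0$ and $1$, say the dyadic rationals, and set $\tilde Y_q:=\limmed_n Y^n_q$ for $q\in D$. By the quoted property of the medial limit each $\tilde Y_q$ is universally measurable, and since $Y^n_q\to Y^P_q$ in measure $P$ we get $\tilde Y_q=Y^P_q$ $P\as$ for every $P\in\cP$; as $D$ is countable there is, for each $P$, a single $N^P\in\cN^P$ off which $\tilde Y_q=Y^P_q$ for all $q\in D$ at once. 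To upgrade universal measurability to $\cF^*_q$-measurability I would fix $P$ and replace the $(\cF_q\vee\cN^P)$-measurable $Y^n_q$ by $\cF_q$-measurable versions $\bar Y^n_q$ with $Y^n_q=\bar Y^n_q$ $P\as$; then $\bar Y_q:=\limmed_n\bar Y^n_q$ is measurable for the universal completion $\cF^u_q$ of $\cF_q$, and since $\cF^u_q\subseteq\cF_q\vee\cN^P$ while $\tilde Y_q=\bar Y_q$ $P\as$, we obtain $\tilde Y_q\in\cF_q\vee\cN^P$. As $P$ was arbitrary, $\tilde Y_q\in\cF^*_q$.

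Next I would regularize $q\mapsto\tilde Y_q$ to a c\`adl\`ag path. Let $\Omega_0$ be the set of $\omega$ for which $q\mapsto\tilde Y_q(\omega)$ is bounded on $D$ and has finitely many upcrossings of every rational interval $[a,b]$. Each such condition is a countable statement in the $\tilde Y_q$, so $\Omega_0$ is universally measurable; moreover, for $\omega\notin N^P$ the map $q\mapsto\tilde Y_q(\omega)=Y^P_q(\omega)$ is the restriction to $D$ of the c\`adl\`ag path $Y^P(\omega)$ and so lies in $\Omega_0$. Hence $\Omega_0^c\subseteq N^P$ is $P$-null for every $P\in\cP$, which gives $\Omega_0\in\cF^*_0$. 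On $\Omega_0$ I would define $Y_t:=\lim_{q\downarrow t,\,q\in D}\tilde Y_q$ for $t<1$ and $Y_1:=\tilde Y_1$, and set $Y\equiv0$ off $\Omega_0$. By the classical regularization lemma, boundedness together with finiteness of all upcrossings guarantees that the one-sided limits exist everywhere and that $t\mapsto Y_t(\omega)$ is c\`adl\`ag; off $\Omega_0$ the path is constant, so $Y$ is c\`adl\`ag for every $\omega$.

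Finally I would verify adaptedness and the identification. Since $\F^*$ is the universal augmentation of a right-continuous filtration it is itself right-continuous, so $\cF^*_t=\bigcap_{s>t}\cF^*_s$; as $Y_t$ is, on $\Omega_0\in\cF^*_0$, a limit of the $\cF^*_s$-measurable variables $\tilde Y_q$ with $q\downarrow t$, it is $\cF^*_t$-measurable, whence $Y$ is $\F^*$-adapted. For the identification, for $\omega\notin N^P$ we have $\omega\in\Omega_0$ and, using right-continuity of $Y^P$, $Y_t(\omega)=\lim_{q\downarrow t}\tilde Y_q(\omega)=\lim_{q\downarrow t}Y^P_q(\omega)=Y^P_t(\omega)$ for every $t$, with equality at $t=1$ because $1\in D$; thus $Y$ and $Y^P$ are indistinguishable under $P$, i.e.\ $Y=Y^P$ $P\as$ for all $P\in\cP$. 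The main obstacle is exactly this passage from the fixed-time almost-sure identity to a single c\`adl\`ag process, and the only delicate bookkeeping is confirming that restricting the medial limit to $D$ and regularizing does not destroy $\F^*$-adaptedness; both are handled by the right-continuity of $\F^*$ and the fact that $\Omega_0$ sits in $\cF^*_0$.
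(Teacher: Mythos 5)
Your proof is correct and follows essentially the same route as the paper: medial limits taken at a countable dense set of times, identification with $Y^P$ there, and path-by-path regularization by right limits along rationals, with the exceptional set absorbed into $\cF_0^*$ and adaptedness coming from right-continuity. The differences are only cosmetic---you make explicit two points the paper leaves implicit (the upgrade of the universal measurability of $\limmed_n Y^n_q$ to $\cF^*_q$-measurability, and the right-continuity of $\F^*$), and you isolate a good set via boundedness and finite upcrossings rather than defining the $\limsup$ everywhere and then discarding the non-c\`adl\`ag paths.
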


\begin{proof}
   Let $r\in[0,1]$ be a rational number and define $\tilde{Y}_r:=\limmed_n Y^n_r$. Then $\tilde{Y}_r$ is measurable with respect to the universal completion of $\cF_r$, which is contained in $\cF_r^*$. Moreover,
  \begin{equation}\label{eq:proofLimmedRat}
    \tilde{Y}_r=Y_r^P\quad P\as\quad\mbox{for all}\quad P\in\cP.
  \end{equation}
  Given arbitrary $t\in[0,1)$, we define $Y_t:=\limsup_{r\downarrow t} \tilde{Y}_r$, where $r$ is rational (and $Y_1:=\tilde{Y}_1$). Fix $P\in\cP$. Since $Y^P$ is c\`adl\`ag,~\eqref{eq:proofLimmedRat} entails that
  \[
    Y_t= \limsup_{r\downarrow t} \tilde{Y}_r = \limsup_{r\downarrow t} Y^P_r = Y^P_t\quad P\as
  \]
  and that the $\limsup$ is actually a limit outside a $P$-nullset. As a consequence,
  \[
    N=\{\omega\in\Omega:\, Y_\cdot(\omega) \mbox{ is not c\`adl\`ag}\}
  \]
  is a $P$-nullset. Since $P\in\cP$ was arbitrary, $N$ is actually a nullset under every $P\in\cP$ and therefore contained in $\cF_0^*$. We redefine $Y\equiv 0$ on $N$, then $Y$ is $\F^*$-adapted and all paths of $Y$ are c\`adl\`ag. Since $Y$ is also a $P$-modification of $Y^P$, we have $Y=Y^P$ $P$-a.s.
\end{proof}

Our main result can then be proved as follows.

\begin{proof}[Proof of Theorem~\ref{th:aggregInt}.]
  We first assume that $H$ is uniformly bounded. Then Lemma~\ref{le:approxSeq} yields a sequence $\int H^n\,dX$ of pathwise defined integrals which converge in $ucp(P)$ for all $P\in\cP$. According to Lemma~\ref{le:limmed}, there exists a process $\int H\,dX$ with the desired properties. For general $H$, we use the previous argument to define $\int H\1_{|H|\leq n}\,dX$ for $n\geq1$. In view of~\eqref{eq:truncationConv}, we may apply Lemma~\ref{le:limmed} once more to obtain $\int H\,dX$.
\end{proof}

\pagebreak

\begin{remark}\label{rk:concluding}
  (i) If the integrand $H$ has left-continuous paths, the assertion of Theorem~\ref{th:aggregInt} holds true without Assumption~\ref{ass:domination}.
  Indeed, set $A_t:=t$ and define $H^n$ as in~\eqref{eq:Hn}. Then, by the left-continuity, we have $H^n_t(\omega)\to H_t(\omega)$ for all $t$ and $\omega$, without exceptional set. The rest of the proof is as above. (Of course, there are other ways to define $H^n$ in this case, such as discretization.)

  (ii) Theorem~\ref{th:aggregInt} can be obtained in a filtration slightly smaller than $\F^*$. Indeed, the same proofs apply if $\F^*$ is replaced by the filtration obtained as follows: first, augment $\F$ by the collection $\cap_{P\in\cP} \cN^P$ of $\cP$-polar sets, then, take the universal augmentation with respect to all probability measures (and not just those in $\cP$). Our proofs also show that the random variable $\int_0^1 H\,dX$ is measurable with respect to the universal completion of $\cF_1$, without adding the $\cP$-polar sets.

  (iii) Needless to say, our ``construction'' of the stochastic integral is not ``constructive'' in the proper sense; it merely yields an existence result.
\end{remark}

\newcommand{\dummy}[1]{}

\end{document}